\newcommand{\marg}[1]{}
\newcommand{\david}[1]{}
\newcommand{\bjorn}[1]{}
\newcommand{\C}{{\mathbb C}}
\newcommand{\F}{{\mathbb F}}
\newcommand{\G}{{\mathbb G}}
\newcommand{\Q}{{\mathbb Q}}
\newcommand{\Z}{{\mathbb Z}}
\newcommand{\Qbar}{{\overline{\Q}}}
\newcommand{\calE}{{\mathcal E}}
\newcommand{\scrE}{{\mathscr E}}
\def\Q{\mathbb{Q}}
\def\C{\mathbb{C}}
\def\P{\mathbb{P}}
\def\Z{\mathbb{Z}}
\DeclareMathOperator{\Gal}{Gal}
\newcommand{\GalQ}{{\Gal}(\Qbar/\Q)}
\numberwithin{equation}{section}
\newtheorem{theorem}[equation]{Theorem}
\newtheorem{lemma}[equation]{Lemma}
\newtheorem{proposition}[equation]{Proposition}
\theoremstyle{definition}
\newtheorem{definition}[equation]{Definition}
\newtheorem{example}[equation]{Example}
\theoremstyle{remark}
\newtheorem{remark}[equation]{Remark}
\begin{document}

\title[Primitive integral solutions to]{Primitive integral solutions to $x^2 + y^3 = z^{10}$}
\author{David Brown}
\address{Department of Mathematics, University of California, Berkeley, CA
94720-3840, USA}
\email{brownda@math.berkeley.edu}

\date{\today}

\begin{abstract}
    We classify primitive integer solutions to $x^{2} + y^{3} =     z^{10}$. 
    The technique is to combine modular methods at the prime 5, number field enumeration techniques in place of modular methods at the prime 2, Chabauty techniques for elliptic curves
    over number fields, and local methods.

\end{abstract}

\maketitle

\section{Introduction}
\label{introduction}

We say a triple $(s,t,u)$ is \emph{primitive} if $\gcd (s,t,u)=1$. For integers $a,b,c \geq 3$ the equation $x^{a} + y^{b} = z^{c}$ is expected to have no primitive integer solutions with $xyz \neq 0$, while for $n \leq 9$ the equation $x^{2} + y^{3} = z^{n}$ has many solutions with $xyz \neq 0$; the triple $(\pm 3, -2, 1)$ is, for instance, always a solution. Moreover, there are infinitely many parametriezed solutions with $xyz \neq 0$ and $z^2 \neq 1$ when $n \leq 5$ and there are some (but only finitely many) such solutions  for $6 \leq n \leq 9$; see \cite{pss} for the case $n = 7$ and a detailed review of previous work on generalized Fermat equations.

 The `next' equation to solve is $x^{2} + y^{3} = z^{10}$ -- it is the first of the form $x^{2} + y^{3} = z^{n}$ expected to have no solutions such that $xyz \neq 0$ and $z^2 \neq 1$.
\begin{theorem}
    \label{T:mainTheorem}
    The primitive integer solutions to $x^{2} + y^{3} = z^{10}$ are
    the 12 triples
    \[
        (\pm 1, -1, 0), (\pm 1, 0, \pm 1), (0,1,\pm 1), (\pm 3, -2, \pm 1).
    \]
\end{theorem}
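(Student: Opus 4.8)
The plan is to exploit the two factorizations of the exponent, $z^{10}=(z^2)^5$ and $z^{10}=(z^5)^2$, using the first to bring in the modular machinery for signature $(2,3,5)$ and the second to set up the descent that is delicate at the prime $2$. Writing $W=z^2$, a primitive solution of $x^2+y^3=z^{10}$ yields a primitive solution of $x^2+y^3=W^5$ of signature $(2,3,5)$, to which I would attach a Frey--Hellegouarch curve $E$ whose conductor is supported on $2$, $3$, and the primes dividing $xy$. The feature that distinguishes our problem from the full $(2,3,5)$ equation is precisely that $W$ must be a perfect square, and the whole argument is organized around propagating this extra constraint through the descent.

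First I would run the modular method at $5$. After checking modularity of $E$ and the necessary irreducibility input, level lowering following Ribet forces $\overline{\rho}_{E,5}$ to arise from a newform of small, explicitly bounded level. I would enumerate these newforms and discard most of them by comparing traces of Frobenius at a few auxiliary primes against the congruence constraints imposed by any putative solution; the cases that survive should either be reducible or correspond to a short, explicit list of descent data (twists). This step converts the fifth-power condition into finitely many parametrizations of $(x,y,W)$ by binary forms.

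Because the analogous modular argument behaves badly at $2$, I would instead control the prime $2$ by number-field enumeration. From the factorization $y^3=(z^5-x)(z^5+x)$ together with primitivity, the relevant descent is governed by a number field $K=\Frac(\cdots)$ of bounded degree whose ramification is confined to a small explicit set of primes. Enumerating all such fields (there are finitely many, by Hermite--Minkowski, and they can be listed from tables of fields of bounded discriminant) pins down the possibilities for $K$ and realizes the square condition $W=z^2$ as the problem of finding points on a finite collection of elliptic curves defined over these $K$.

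Finally I would apply Chabauty for elliptic curves over number fields to each such curve: compute a Mordell--Weil group, verify that the rank is small enough relative to $[K:\Q]$ for the elliptic-curve Chabauty bound to apply, and extract the finite list of $K$-points, reading off that the only ones come from the twelve asserted triples. Any residual pieces where the rank is too large for Chabauty I would eliminate by local (congruence) obstructions at well-chosen primes. I expect the main obstacle to lie exactly here: getting the Mordell--Weil ranks over the number fields under control so that the Chabauty inequality holds, and patching the finitely many exceptional descent components with $p$-adic and congruence arguments when it does not.
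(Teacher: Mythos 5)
Your outline reproduces, almost verbatim, the strategy announced in the paper's abstract (modular method at $5$, number-field enumeration in place of level lowering at $2$, elliptic Chabauty over number fields, local methods for the leftovers), and your guiding idea --- exploit both factorizations $z^{10}=(z^2)^5$ and $z^{10}=(z^5)^2$ --- is exactly the right one. The genuine gap is at the center of the argument: you never construct the objects on which elliptic Chabauty is to be run, and the mechanism you propose cannot produce them. The paper's pivot is the genus-one modular curve $X$, the normalization of $X_{\Delta}\times_{X(1)}X_0(5)$, where $X_{\Delta}$ is the $A_3$-quotient of $X(2)$. The constraint at $5$ (level lowering gives $E_{(a,b,c)}[5]\cong E''[5]$ for one of the $13$ curves $E''\in\calE$, hence a $5$-isogeny over the explicit degree-$6$ field $K_{j(E'')}$, i.e.\ a point of $X_0(5)(K_{j(E'')})$) and the constraint at $2$ (the square condition, in the form $j(E_{(a,b,c)})-12^3=-3\,(24a/c^5)^2$, i.e.\ a rational point of the twist $X_{\Delta_{E_0}}\cong\P^1$) are glued together precisely by this fiber product: a primitive solution yields a point of the elliptic curve $X_{E_0}$ over the degree-$6$ field whose image under the degree-$2$ map to $X_{\Delta_{E_0}}$ is \emph{rational} (lemmas \ref{L:moduli2} and \ref{L:key}). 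That is what makes Bruin's method applicable: the covering map to $\P^1$ is defined over $\Q$, and the ranks over the degree-$6$ fields turn out to be $0$ or $1$, far below the degree. Nothing in your proposal plays this role; ``realizes the square condition $W=z^2$ as the problem of finding points on a finite collection of elliptic curves over $K$'' is asserted, not constructed.

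Moreover, the specific descent you propose at $2$ --- factoring $y^3=(z^5-x)(z^5+x)$ --- is a different route, the one the paper's introduction describes and sets aside. It makes no use of the modular data at $5$ (so your first step becomes idle), and after the standard gcd analysis it leads to the signature-$(3,3,5)$ equations $y_1^3+y_2^3=z^5$ and $y_1^3+2y_2^3=z^5$; the second forces a genus-$2$ curve over a cubic field whose Jacobian has rank $3$, where rank-less-than-degree Chabauty fails and Siksek's heavier combination of classical and elliptic Chabauty is required. If instead you mean to impose $z^2=h_i(s,t)$ on the binary-form parametrization coming from the method at $5$, you land on Edwards's $27$ genus-$5$ hyperelliptic curves, which the paper states are computationally out of reach. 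The paper does use Hermite--Minkowski enumeration at $2$, but for the splitting field of $x^3+3bx-2a$, i.e.\ $\Q(E_{(a,b,c)}[2])$, which is unramified outside $\{2,3\}$; this substitutes for level lowering at $2$ and feeds the local tests, and it is \emph{not} the source of the fields over which Chabauty is run (those come from the $5$-isogeny). Finally, your fallback --- killing the residual large-rank cases by congruence obstructions --- cannot work where it is needed most: any component containing an actual solution is everywhere locally solvable, and the hardest component always contains $(\pm 3,-2,\pm 1)$. The paper respects this division of labor: elliptic Chabauty handles exactly the three $j$-invariants $\{0,1728,-13824\}$ that carry the known solutions, and local methods are applied only to the curves in $\calE$ that carry none.
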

It is clear that the only primitive solutions with $xyz = 0$ are the eight above. 
To ease notation we set $S(\Z) = \{(a,b,c) : a^2 + b^3 = c^{10}\, |\, (a,b,c) \text{ is primitive} \}$.\\

One idea is to use Edwards's parameterization of $x^{2} + y^{3} =
z^{5}$. His thesis \cite{Edwards:235} produces a list of 27 degree 12 polynomials $f_i \in \Z[x,y]$ such that if $(x,y,z)$ is such a
primitive triple, then there exist $i,s,t \in \Z$ such that
\[
z = f_i(s,t)
\]
and similar polynomials for $x$ and $y$. This would reduce the problem to
finding integral points on the 27 genus 5 hyperelliptic curves $-z^{2}
= f(s,t)$. This is a tempting approach, but the computational obstructions have not yet been overcome.

Alternatively, an elementary argument yields a parameterization of
$x^{2} + y^{3} = z^{2}$, leading one to (independently) solve each of the two equations
\begin{align*}
	y_{1}^{3} + y_{2}^{3} & = z^{5} \text{  or}\\
	y_{1}^{3} + 2y_{2}^{3} & = z^5.
\end{align*}
Following \cite{Bruin:twoCubes}, a resultant/Chabauty argument resolves the first equation. For the second equation one must pass to a degree 3 number field, where one runs into a genus 2 curve whose Jacobian has rank 3. There is less hope here for a classical Chabauty argument; \cite{Siksek:chabauty} gives a solution along these lines, combining classical Chabauty methods with the elliptic Chabauty methods of \cite{Bruin:ellChab}.


In a third direction one may consider the modular method used for
example in \cite{pss} to resolve the equation $x^{2} + y^{3} = z^{7}$.
This method is most effective for large $p$; in particular, for a prime $p \geq
7$ the modular curve $X(p)$ has genus $> 2$ and conjecturally for $p \geq 17$
the curves $X_E(p)$ have no non-trivial $\Q$-points. Stated as a question by Mazur \cite{Mazur:isogenies}*{P. 133}  for $p \geq 7$ and later relaxed to $p \geq 17$ (see \cite{FreyM:arithmeticModular}*{Table 5.3} for examples), this is now often called the Frey-Mazur conjecture. We note that direct application of this method to the equation $x^2 + y^3 = z^{10}$ at the prime $5$ fails because $X(5)$ has genus 0.\\

The approach of this paper is to combine the traditional modular methods  at the prime 5 described above and `elementary' modular methods (based on number field enumeration) at the prime 2. Here the relevant modular curve is $X(10)$, which has genus 13. However it covers an elliptic curve $X$ (with a modular interpertation) and the relevant twists $X_{(E,E')}(10)$ cover $X$ over a degree $6$ number field $K_{E,E'}$. For the pair $(E,E')$ corresponding to the solution $(3,-2,1)$ the elliptic curve $X$ has rank 1 over $K_{E,E'}$ and one may apply the elliptic Chabauty methods of \cite{Bruin:ellChab}. Various local methods finish off the other cases.

The computer algebra package Magma \cite{Magma} is used in an essential way throughout.\\

\subsection*{Acknowledgements}
I thank Bjorn Poonen for numerous conversations and for carefully reading earlier drafts of this paper and the participants of \url{mathoverflow.net} for help finding various references. Some computations were done on \url{sage.math.washington.edu}, which is supported by National Science Foundation Grant No. DMS-0821725.



\section{A modular quotient of X(10)}
\label{S:relevantCurve}
Here we construct an elliptic quotient $X$ of the full modular curve $X(10)$ whose twists $X_E$ will be the center of our calculations.\\

Let $E$ be an elliptic curve and $p$ a prime. Following \cite{pss}*{Section 4}, we define $X_E(p)$ to be the compactified moduli space of elliptic curves $E'$ plus symplectic isomorphisms (i.e. respecting Weil pairings) $E'[p] \cong E[p]$ of $\GalQ$-modules. Similarly, we define $X(p)$ to be the variant of the classical modular curve which parameterizes $E'$ plus symplectic isomorphisms  $E'[p] \cong \mu_p \times \Z/p\Z$ of $\GalQ$-modules; in particular $X(p)$ is defined over $\Q$, is geometrically connected, and $X_E(p)_{\C}$ is isomorphic to $X(p)_{\C}$. 
Finally, for $p = 5$ we define (as in \cite{pss}*{4.4}) the variant $X^-_E(5)$ to be the compactified moduli space of elliptic curves $E'$ plus anti-symplectic isomorphisms $E'[5] \cong E[5]$ of $\GalQ$-modules; here we define anti-symplectic to mean that the map induced by Weil pairings $\mu_5 \to \mu_5$ is the map $\zeta \mapsto \zeta^2$.

\begin{remark}[\cite{pss}*{4.4}]
\label{R:anti}
  Any isomorphism $E'[5] \cong E[5]$ of $\GalQ$-modules gives rise to a point on either $X_E(5)$ or $X_E^-(5)$. Indeed, for a positive integer $N$ with $(N,5) = 1$, the multiplication by $N$ map $E \xrightarrow{[N]} E$ induces an isomorphism $E[5] \cong E[5]$ which changes the Weil pairing by $N^2$. Since $\F_5^* / (\F_5^*)^2 = \{1,2\}$, after composing with $[N]$ for some $N$, we arrive at an isomorphism $E'[5] \cong E[5]$  which either respects the Weil pairing or changes it by 2.
\end{remark}

Recall that $X_0(p)$ is the modular curve whose points correspond to $p$-isogenies of elliptic curves up to twists. There are natural maps 
\[
X(p) \to X_0(p) \to X(1).
\]
When $p = 2$, $\Gal(X(2) / X(1)) \cong S_3$, and one can check by a direct calculation that the quotient of $X(2)$ by the
normal subgroup $A_3$ is the degree 2 cover $X_{\Delta} \cong \P^1$ of $X(1) \cong \P^1$ given by $z \mapsto z^2 + 12^3$.

\begin{definition}\label{D:X}
  We define $X$ to be the normalization of $X_{\Delta} \times_{X(1)} X_0(5)$. Denote by $Y'$ the affine curve $Y(1)-\{12^3\}$, let  $Y \subset X$ be the preimage of $Y'$ in $X$, and let $K$ be a number field; then a point in $Y(K)$ corresponds (up to twists) to a 5-isogeny $E \to E'$ defined over $K$ with $j(E') \neq 12^3$ and a choice of a square root of $j(E)-12^3 = c_6^2/\Delta_E$ in $K$ (i.e. the fiber product $X_{\Delta} \times_{X(1)} X_0(5)$ is smooth away from cusps and the fiber of $12^3 \in X(1)$, which one can see via moduli using \cite{DiamondI:modularCurves}*{Equation 9.1.2} or directly from the equations for $X$ computed below). 
\end{definition}

	\[
	\xymatrix{
		X(10) 	\ar[rr]\ar[dd]\ar@{-->}[dr]	&	 		&	X(5)	\ar[d]	\\
		 		&	X 	\ar[r]\ar[d]	&	X_0(5)	\ar[d]	\\		  
		X(2) 	\ar[r]		&	X_{\Delta} 	\ar[r]		&	X(1)	
	}
	\]

Since $A_3$ is normal in $S_3$, the natural map $X_E(2) \to X(1)$ factors
through a twist $X_{\Delta_E}$ of $X_{\Delta}$ mapping to $X(1)$ via
$z \mapsto \Delta_E\cdot z^2 + 12^3$,  \marg{This is correct up to a square} and we define
$X_E$ to be the normalization of $X_{\Delta_E} \times_{X(1)} X_0(5)$. An easy calculation using \cite{DiamondI:modularCurves}*{Equation 9.1.2} shows that $X_E$ has genus 1; we omit the proof as this will be clear below when we compute explicit equations for $X_E$.
Again, away from the cusps  and the fiber over $12^3 \in X(1)$, the fiber product is smooth and so for a number field $K$ and for $Y_E$ the preimage of $Y'$ in $X_E$, a point in $Y_E(K)$ corresponds (up to twists) to a 5-isogeny $E' \to E''$ defined over $K$ such that $j(E) \neq 12^3$ and a choice of a square root of $(j(E')-12^3)/\Delta_{E} = c_6^2/(\Delta_E\Delta_{E'})$ in $K$.

\subsection{Equations}
\label{ss:equations}
By \cite{McMurdy:explicitParam}*{Table 3}, equations for the map $Y_0(5) \xrightarrow{\pi_1} Y(1)$ sending a 5-isogeny $(E \to E')$ to $j(E)$ are given by
\[
t \mapsto \frac{(t^2 + 250t + 3125)^3}{t^5} = \frac{(t^2 - 500t - 15625)^2(t^2 + 22t + 125)}{t^5} + 12^3.
\]
Equations for $Y_E$ are thus obtained by setting equal the equations
\[
\Delta_E\cdot z^2 + 12^3 = \frac{(t^2 - 500t - 15625)^2(t^2 + 22t + 125)}{t^5} + 12^3,
\]
simplifying, and making the  change of coordinates $y = zt^3/(t^2 - 500t - 15625)$. This gives
\[
\Delta_E\cdot y^2 = t(t^2 + 22t + 125),
\]
and the map $Y_E \xrightarrow{p_{\Delta_E}} X_{\Delta_E}$ is given by 
\[
(t,y) \mapsto \frac{y(t^2 - 500t - 15625)}{t^3}.
\]

\section{Modular Methods at 2}
\label{S:modular2}
Following \cite{pss}*{4.6} we define for a primitive triple $(a,b,c)$ the elliptic curve
\[
E = E_{(a,b,c)} : Y^2 = X^3 + 3bX - 2a.
\]

\begin{remark}
\label{R:modSquares}
The elliptic curve $E$ has $j$-invariant $12^3b^3/c^{10} = -12^3a^2/c^{10} + 12^3$; thus $j(E) - 12^3 = -12^3a^2/c^{10}$, which is $-3$ times a square. 
\end{remark}

Setting $E_0 := E_{(3,-2,1)}$, this remark proves the following.  

\begin{lemma}
\label{L:moduli2}
  To $(a,b,c) \in S(\Z)$ one may associate a point on $X_{\Delta_{E_0}}(\Q)$. 
\end{lemma}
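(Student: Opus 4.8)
The plan is to read the desired $\Q$-point straight off the description of $X_{\Delta_{E_0}}$ as a double cover of $X(1)$. Recall that $X_{\Delta_{E_0}}$ is the twist of $X_\Delta$ whose structure map to $X(1) \cong \P^1$ is $z \mapsto \Delta_{E_0}\cdot z^2 + 12^3$. Consequently a $\Q$-point of $X_{\Delta_{E_0}}$ lying over a value $j \in X(1)(\Q)$ is exactly a choice of square root in $\Q$ of $(j - 12^3)/\Delta_{E_0}$. So to attach a point to $(a,b,c) \in S(\Z)$, I would take $E = E_{(a,b,c)}$, set $j = j(E)$, and exhibit such a square root.

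First I would compute $\Delta_{E_0}$. Since $E_0 = E_{(3,-2,1)}$ has Weierstrass equation $Y^2 = X^3 - 6X - 6$, its discriminant is $-16(4(-6)^3 + 27(-6)^2) = -16 \cdot 108 = -12^3$. Next, Remark~\ref{R:modSquares} gives $j(E) - 12^3 = -12^3 a^2/c^{10}$. Dividing, $(j(E) - 12^3)/\Delta_{E_0} = a^2/c^{10} = (a/c^5)^2$, which is a square in $\Q$. Thus $z = a/c^5$ is the required square root, and the pair $(j(E),\, a/c^5)$ is the point of $X_{\Delta_{E_0}}(\Q)$ associated to $(a,b,c)$.

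Conceptually the reason this works is that the class of $j(E) - 12^3$ in $\Q^*/(\Q^*)^2$ is the class of $-3$ (Remark~\ref{R:modSquares}), and the same holds for $\Delta_{E_0} = -12^3 = -3 \cdot 24^2$; matching these two classes is precisely what forces the quotient to be a square. There is accordingly no genuine obstacle here: once $\Delta_{E_0}$ is computed the statement is a one-line consequence of the remark, which is exactly why the lemma is phrased as being proved \emph{by} that remark. The only point deserving a word is the degenerate case $a = 0$ (the triples $(0,1,\pm 1)$), where the quotient is $0$ and the associated point $z = 0$ is the ramification point of $X_{\Delta_{E_0}} \to X(1)$ lying over $j = 12^3$; this is still a legitimate $\Q$-point, so the association is defined on all of $S(\Z)$.
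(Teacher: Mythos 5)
Your proposal is correct and is essentially the paper's own argument: the paper proves this lemma in one line by citing Remark~\ref{R:modSquares}, and what you have written is exactly the calculation that citation leaves implicit, namely $\Delta_{E_0} = -12^3$ and hence $(j(E)-12^3)/\Delta_{E_0} = (a/c^5)^2$, so that the square root $a/c^5$ furnishes the rational point on the twist $X_{\Delta_{E_0}}$. Your observation that everything reduces to matching the class of $-3$ in $\Q^*/(\Q^*)^2$ is precisely the content of the remark, so there is nothing to add.
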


In the following lemmas, we calculate $E_{(a,b,c)}[2]$ as a $\GalQ$-module. The conclusion will be that the set of such possibilities is finite and explicitly computable. In fact, one can realize each possible Galois module as $E'[2]$ for $E'$ an elliptic curve, with finitely many explicit possibilities for $E'$.  One may thus associate to $E_{(a,b,c)}$ a point on one of the modular curves $X_{E'}(2)$ and then combine this with level lowering at the prime $5$ to get a point on a twist of the genus 13 curve $X(10)$. Another idea is to use the explicit equations for the curve $X_{E'}(2)$ given in \cite{RubinS:mod2} to derive local information about the triple $(a,b,c)$.

The idea is that knowledge of $E_{(a,b,c)}[2]$ is equivalent to knowledge of the splitting field $L$ of the polynomial $f = X^3 + 3bX - 2a$, and we will find that $L$ is unramified outside of $\{2,3\}$. By Hermite's theorem, there are only finitely many fields of degree at most 3 and unramified outside of $\{2,3,\infty\}$, and with the aid of a computer one can easily enumerate them and recognize each as the field of definition of the 2-torsion of an elliptic curve $E'$ with good reduction outside of $\{2,3\}$, effectively `lowering the level' of $E_{(a,b,c)}$ at the prime $2$ (an otherwise difficult task given that the usual level lowering theorems don't apply -- the mod $2$ representation is often ramified at 2). The details come in the next two lemmas.

\begin{lemma}[`Level lowering' at 2]
\label{L:reducible2}
  Let $(a,b,c) \in S(\Z)$ be a primitive triple and suppose that $a \neq 0$. Then $f(x) = x^3 + 3bx - 2a$ is irrreducible.
\end{lemma}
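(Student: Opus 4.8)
The plan is to prove the contrapositive in the usual way for cubics: a monic cubic is reducible over $\Q$ exactly when it has an integer root, so I assume $f(r)=0$ for some $r\in\Z$ and aim for a contradiction. From $r^3+3br-2a=0$ I get $2a=r(r^2+3b)$. Primitivity of $(a,b,c)$ forces $a,b,c$ to be pairwise coprime, and from $2a=r(r^2+3b)$ one checks quickly that $\gcd(r,b)=1$. One caveat to record first: the hypothesis $a\neq 0$ alone is \emph{not} enough, since the genuine triples $(\pm 1,-1,0)\in S(\Z)$ give reducible $f$ (for instance $x^3-3x-2=(x-2)(x+1)^2$); so I will use that we have already disposed of the triples with $xyz=0$ and may assume $c\neq 0$.

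First I would exploit the discriminant $\operatorname{disc}(f)=-108\,c^{10}=-3\,(6c^5)^2$. Since $c\neq 0$ this is nonzero and negative, hence not a perfect square, so $f$ cannot have three rational roots; thus a reducible $f$ has exactly one rational root $r$ and factors as $f=(x-r)\,g(x)$ with $g(x)=x^2+rx+(r^2+3b)$ irreducible. Writing $\operatorname{disc}(f)=g(r)^2\operatorname{disc}(g)$ with $g(r)=3(r^2+b)$ recovers the identity $(r^2+b)^2(r^2+4b)=4c^{10}$, and comparing with $\operatorname{disc}(g)=-3(r^2+4b)$ forces $r^2+4b$ to be a nonzero perfect square, say $r^2+4b=e^2$.

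The key step is then a change of variables. Using $e^2=r^2+4b$ one gets $8a=r(r^2+3e^2)=r^3+3e^2r$, and, after choosing the sign of $e$ so that $(r^2+b)e=2c^5$, also $8c^5=e(e^2+3r^2)=e^3+3r^2e$. Cubing $e\pm r$ and regrouping yields $(e+r)^3=8(c^5+a)$ and $(e-r)^3=8(c^5-a)$, so $P:=(e+r)/2$ and $Q:=(e-r)/2$ are integers with
\[
P^3+Q^3=2c^5,\qquad P^3-Q^3=2a,\qquad PQ=b.
\]
Here $\gcd(P,Q)=1$, since $\gcd(P,Q)^3$ divides $\gcd(2a,2c^5)=2$, and $P\neq Q$ precisely because $a\neq 0$. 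So a reducible $f$ would produce a nontrivial coprime solution of $P^3+Q^3=2c^5$.

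The main obstacle is exactly this last equation, a generalized Fermat equation of signature $(3,3,5)$, and I expect no elementary congruence to rule it out: a $2$-adic valuation count, for instance, dispatches only the case $b$ even (there the left side of $(r^2+b)^2(r^2+4b)=4c^{10}$ is odd while the right side is divisible by $4$), but leaves the cases $a$ or $c$ even untouched. To finish I would factor $P^2-PQ+Q^2=N_{\Q(\zeta_3)/\Q}(P+Q\zeta_3)$ over the Eisenstein integers $\Z[\zeta_3]$; coprimality of $P+Q$ and $P^2-PQ+Q^2$ away from $3$ shows that $P+Q\zeta_3$ is, up to units and the prime above $3$, a perfect fifth power, turning $P^3+Q^3=2c^5$ into a Thue-type equation and ultimately a genus-$2$ or elliptic-Chabauty computation of the sort in \cite{Bruin:ellChab} and \cite{Bruin:twoCubes}. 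This descent to a $(3,3,5)$ equation is the crux; everything preceding it is formal manipulation.
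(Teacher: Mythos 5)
Your formal reduction is correct, and you have caught a real imprecision in the statement: for the primitive triples $(\pm 1,-1,0)$ one has $a\neq 0$ but $f(x)=x^3-3x-2=(x-2)(x+1)^2$ is reducible, so the lemma does require $c\neq 0$ (equivalently $\Delta_{E_{(a,b,c)}}=-12^3c^{10}\neq 0$), which the paper's proof assumes implicitly by treating $E_{(a,b,c)}$ as an elliptic curve with multiplicative reduction data. Your chain of identities --- the integer root $r$, the discriminant relation $(r^2+b)^2(r^2+4b)=4c^{10}$, and the substitution $P=(e+r)/2$, $Q=(e-r)/2$ giving $P^3=c^5+a$, $Q^3=c^5-a$ coprime with $P\neq Q$ --- is a valid and clean way to say that reducibility of $f$ forces $c^5\pm a$ to be coprime cubes. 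This opening is genuinely different from the paper's, which instead observes that reducibility makes $\Q(E_{(a,b,c)}[2])$ a field of degree at most $2$ unramified outside $\{2,3\}$, runs through the finitely many possibilities via the Rubin--Silverberg parametrization \cite{RubinS:mod2}, and kills every field except $\Q(\sqrt{-3})$ using the ``$j-12^3$ is $-3$ times a square'' obstruction of remark \ref{R:modSquares}.

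However, there is a genuine gap: by your own admission the crux is the generalized Fermat equation $P^3+Q^3=2c^5$ in coprime integers with $P\neq Q$, and you do not solve it --- you only outline a program. That equation admits the coprime solution $(1,1,1)$, so no congruence or local argument can finish it; it is not the equation treated in \cite{Bruin:twoCubes} (the coefficient $2$ matters); and your Eisenstein-integer step does not terminate in a Thue equation but in a degree-$5$ binary form set equal to $2u^5$, which requires a further descent or elliptic Chabauty whose rank hypotheses you have not checked. The missing observation that would complete your argument is that no passage to $\Z[\zeta_3]$ is needed: since $P,Q$ are both odd, $e=2e'$ and $r=2r'$ with $\gcd(e',r')=1$, and your identity $8c^5=e^3+3r^2e$ becomes $c^5=e'(e'^2+3r'^2)$, whose two factors are coprime up to powers of $3$; hence $e'=m^5$ and $e'^2+3r'^2=n^5$ (up to powers of $3$), and dividing by $m^{10}$ and rescaling produces a rational point on $y^2=x^5-3^5$ when $3\nmid e'$ and on $y^2=x^5-3^7$ when $3\mid e'$. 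These are exactly the two genus-$2$ curves that the paper's proof reaches in its $D=3$ case, and Chabauty on them (Jacobian ranks $0$ and $1$; see \cite{me:2310Transcript}) rules out any triple with $a\neq 0$ and $c\neq 0$. So your reduction converges to the same endgame as the paper's, but as written the proposal stops precisely where the real work begins, and the route you sketch for that remaining work is both unverified and more roundabout than necessary.
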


\begin{proof}
  Let $K = \Q(E_{(a,b,c)}[2])$ be the splitting field of $f$. The discriminant of $E_{(a,b,c)}$ is $-12^3c^{10}$. By the standard Tate uniformization argument (that after base change to the maximal unramified extension $\Q_p^{\text{un}}$ of $\Q_p$
there exists an analytic Galois equivariant isomorphism of $E_{(a,b,c)}\otimes \Q_p^{\text{un}}$ with a Tate curve $\G_m/q^{\Z}$), $E_{(a,b,c)}[2]$ (and thus $K$) is unramified at a prime $p>3$ of multiplicative reduction if $2 | v_p(\Delta_{E_{(a,b,c)}})$ (see \cite{Ellenberg:finiteFlatness}*{Corollary 1.2} for a more detailed proof of this fact). Since  $v_p(\Delta_{E_{(a,b,c)}}) = v_p(c^{10})$ for $p > 3$, we conclude that $K$ is unramified outside of $\{2,3\}$. Since $f$ is reducible then $K$ has degree 1 or 2. There are only finitely many such fields of degree $\leq 2$, given by polynomials $x^2 + D$ with $D \in \{ \pm 1, \pm 2, \pm 3, \pm 6\}$. 

Let $E_D$ be the elliptic curve given by $y^2 = x(x^2 + D)$. Then there exists a $D$ as above and  a point $P \in X_{E_D}(2)(\Q)$ representing $E_{(a,b,c)}$ up to quadratic twist. Explicit equations paramaterizing such curves are given in \cite{RubinS:mod2}: there exist $u,v \in \Q$ such that $E_{(a,b,c)}$ is isomorphic to the curve $E_{u,v}\colon y^2 = x^3 + 3D(3v^2-Du^2)x - 2(9D^2uv^2 - D^3u^3)$. The given model has discriminant $\Delta(E_{u,v}) = -2^6 3^6 D (v(v^2 + Du^2)D)^2$. As this may only change by a $12^{\text{th}}$ power, one concludes that for $D \neq 3$, $j(E_{u,v})-12^3 = c_6(E_{u,v})^2/\Delta(E_{u,v})$ is not $-3$ times a square and thus (by  remark \ref{R:modSquares})  cannot be isomorphic  to $E_{(a,b,c)}$. 

When $D = 3$, further analysis of the equation $j(E_{u,v}) = j(E_{(a,b,c)}) = 12^3 b^3/c^{10}$ produces rational points on one of the genus 2 curves given by $y^2 = x^5 - 3^5$ and $y^2 = x^5 - 3^7$;  an application of Chabauty's method (recorded in the transcript of computations at \cite{me:2310Transcript}) determines the finite set of such points, and the only one corresponding to a primitive triple has $a = 0$.    
\end{proof}

\begin{lemma}
\label{L:level2}
  Let $(a,b,c) \in S(\Z)$ be a primitive triple. Suppose that $f(x) = x^3 + 3bx - 2a$ is irreducible. Then $\Q(E_{(a,b,c)}[2]) \cong \Q(E_{(3,-2,1)}[2])$.
\end{lemma}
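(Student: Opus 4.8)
The plan is to mimic the proof of Lemma~\ref{L:reducible2}, replacing the degree-$\le 2$ enumeration there by an enumeration of cubic fields. First I would set $K = \Q(E_{(a,b,c)}[2])$, the splitting field of $f$, and record two facts. By the Tate uniformization argument already used in the proof of Lemma~\ref{L:reducible2} (i.e.\ \cite{Ellenberg:finiteFlatness}*{Corollary 1.2}), the module $E_{(a,b,c)}[2]$ is unramified at every prime $p>3$ dividing $c$: there the curve has multiplicative reduction and $v_p(\Delta_{E_{(a,b,c)}}) = v_p(c^{10})$ is even, so $K$ is unramified outside $\{2,3\}$. Second, $\operatorname{disc}(f) = -4(3b)^3 - 27(2a)^2 = -108(a^2+b^3) = -108\,c^{10} = -3\,(6c^5)^2$, so the quadratic resolvent of $f$ is $\Q(\sqrt{-3})$; since $-3$ is not a square and $f$ is irreducible, $\operatorname{Gal}(K/\Q) \cong S_3$ and the cubic subfield $F = \Q[x]/(f)$ has field discriminant $d_F = -3\,m^2$ with $m$ divisible only by $2$ and $3$.

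Next I would enumerate the candidates. Because the conductor exponent of a cubic field at each of $2$ and $3$ is bounded, there are only finitely many cubic fields unramified outside $\{2,3\}$ with resolvent $\Q(\sqrt{-3})$; their Galois closures are exactly the dihedral $\Z/3$-extensions of $\Q(\zeta_3) = \Q(\sqrt{-3})$ unramified outside the primes above $\{2,3\}$, a finite list produced by Kummer theory over $\Z[\zeta_3]$ (which has class number $1$) or simply by a Magma computation. This is the enumeration promised by Hermite's theorem in the discussion preceding the lemma. Each such field $F_i$ is realized as the $2$-division field $\Q(E_i[2])$ of an explicit elliptic curve $E_i$ with good reduction outside $\{2,3\}$, and one has $\Q(E_{(a,b,c)}[2]) \cong F_i$ precisely when $E_{(a,b,c)}$ gives a rational point on $X_{E_i}(2)$.

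I would then eliminate all $F_i$ except the one attached to $E_0 := E_{(3,-2,1)}$ by the same discriminant-modulo-squares test as in Lemma~\ref{L:reducible2}. For each $i$, the Rubin--Silverberg equations \cite{RubinS:mod2} parametrize the curves $E_{u,v}$ with $E_{u,v}[2]\cong E_i[2]$ and express $\Delta(E_{u,v})$, hence the class of $j(E_{u,v}) - 12^3 = c_6(E_{u,v})^2/\Delta(E_{u,v})$ in $\Q^\times/(\Q^\times)^2$. By Remark~\ref{R:modSquares} the curve $E_{(a,b,c)}$ has $j-12^3$ equal to $-3$ times a square, so any $F_i$ for which this class is not $-3$ times a square cannot occur. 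The surviving candidate is the cubic field of discriminant $-108$, namely $\Q(\sqrt[3]{2})$ (note $f_0 = x^3-6x-6$ has $d_{F_0} = -108$); since the resolvent is $\Q(\sqrt{-3})$ in every case, agreement of the cubic subfields forces $\Q(E_{(a,b,c)}[2]) = \Q(\sqrt[3]{2},\zeta_3) = \Q(E_0[2])$, which is the claim.

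The main obstacle is the elimination step. I expect most candidates $F_i$ to be killed immediately, because their discriminant class is visibly not $-3$ times a square; but --- just as the case $D=3$ in Lemma~\ref{L:reducible2} escaped the crude test and required descending to the genus-$2$ curves $y^2 = x^5-3^5$ and $y^2 = x^5-3^7$ together with a Chabauty computation --- one or more candidate fields may survive the discriminant test and demand a finer analysis. For those I would impose the additional local conditions at $2$ and $3$ coming from the explicit model of $X_{E_i}(2)$, or carry out a descent or Chabauty computation on the resulting curve, to rule them out and leave only $\Q(E_0[2])$.
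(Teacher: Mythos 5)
Your skeleton does match the paper's: the paper's proof of this lemma is literally ``the same as Lemma~\ref{L:reducible2}'', i.e.\ the Tate-uniformization argument showing $\Q(E_{(a,b,c)}[2])$ is unramified outside $\{2,3\}$, a Hermite-type enumeration of the cubic candidates (done via Sage and the Jones database \cite{Jones:tables}, yielding nine fields), realization of each candidate as $\Q(E'[2])$ for an explicit $E'$ of good reduction outside $\{2,3\}$, and then the Rubin--Silverberg/Remark~\ref{R:modSquares} elimination, with Chabauty-style computations for whatever survives, all deferred to \cite{me:2310Transcript}. Your first two paragraphs reproduce this faithfully, and the resolvent observation $\operatorname{disc}(f)=-108c^{10}=-3(6c^5)^2$ is correct.

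The genuine gap is in your third paragraph, where you assert that the discriminant-mod-squares test eliminates everything except $\Q(\sqrt[3]{2})$. It cannot. The square class of $\Delta(E_{u,v})$ is constant in each Rubin--Silverberg family, because $\Q(\sqrt{\Delta})$ is the subfield cut out by the preimage of $A_3$ under the mod-$2$ representation and is therefore an invariant of the Galois module $E'[2]$; so the test ``$j-12^3$ is $-3$ times a square'' detects exactly the condition ``quadratic resolvent $=\Q(\sqrt{-3})$'' and nothing finer. Among the nine cubic fields unramified outside $\{2,3\}$ there are \emph{four} with resolvent $\Q(\sqrt{-3})$, namely the pure cubics $\Q(\sqrt[3]{2})$, $\Q(\sqrt[3]{3})$, $\Q(\sqrt[3]{6})$, $\Q(\sqrt[3]{12})$, with discriminants $-108$, $-243$, $-972$, $-3888$, each $-3$ times a square. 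So three wrong candidates pass your test; worse, since in your enumeration step you already restricted to fields with resolvent $\Q(\sqrt{-3})$, within your own setup the test eliminates nothing at all --- it is the same condition applied twice. Consequently the elimination of $\Q(\sqrt[3]{3})$, $\Q(\sqrt[3]{6})$, $\Q(\sqrt[3]{12})$ is not a possible contingency, as your last paragraph frames it, but is the entire content of the lemma beyond the enumeration: for each of these one must run the analogue of the $D=3$ case of Lemma~\ref{L:reducible2}, substituting the Rubin--Silverberg $j$-invariant into $j(E_{(a,b,c)})=12^3b^3/c^{10}$ (equivalently $j-12^3=-12^3a^2/c^{10}$), extracting auxiliary curves, and determining their rational points by descent/Chabauty. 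You name the right tools but exhibit neither the curves nor the computations, whereas this is precisely the material the paper's proof delegates to its computational transcript.
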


\begin{proof}
  The proof is the same as lemma \ref{L:reducible2}, except that here  the computer algebra package Sage (which implements the Jones database of number fields \cite{Jones:tables})
is used to enumerate all degree 3 number fields unramified outside of $\{2,3,\infty\}$; a transcript of computations verifying  this can be found at \cite{me:2310Transcript}.

\end{proof}

\begin{remark}
 Lemmas \ref{L:reducible2} and \ref{L:level2} will be used in section \ref{S:local} to give local information about possible values of $j(E_{(a,b,c)}) = 12^3b^3/c^{10}$. 
\end{remark}

\begin{remark}
The  mod $3$ Galois representations arising from solutions to the equation $x^2 + y^3 = z^{15}$  are similarly ramified at the prime $3$, so that Ribet's level lowering theorem again does not apply. Nonetheless, the techniques of the `level lowering' lemmas \ref{L:reducible2} and \ref{L:level2} can be pushed (with more work and knowledge of the subfield structure of $\Q(E[3])$) to classify all mod 3 representations arising from solutions to this equation. It remains to be seen if the local information one obtains is actually useful in  solvng this Fermat equation.
\end{remark}

\section{Modular Methods at 5}
\label{S:modular5}
Let $(a,b,c) \in S(\Z)$, let $E = E_{(a,b,c)}$, and recall that we defined $E_0$ to be $E_{(3,-2,1)}$. Following \cite{pss}*{Section 6}, we classify the possibilities for the Galois module $E[5]$.
\begin{lemma}
\label{L:irreducible}
  $E[5]$ is irreducible as a $\GalQ$-module.
\end{lemma}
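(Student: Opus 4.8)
The plan is to argue by contradiction: suppose $E[5]$ is reducible, so it contains a $\GalQ$-stable line. Then $E$ admits a rational $5$-isogeny, equivalently $E = E_{(a,b,c)}$ gives rise to a rational point on the modular curve $X_0(5)$. Since $(a,b,c) \in S(\Z)$, the triple satisfies $a^2 + b^3 = c^{10}$, and by Remark \ref{R:modSquares} the $j$-invariant is $j(E) = 12^3 b^3/c^{10}$. The strategy is to pin down enough arithmetic information about $E$ -- in particular about the reduction types and the behavior of the putative order-$5$ stable subgroup -- to force a contradiction with the known rational points of $X_0(5)$ (or of the associated modular curves parameterizing $5$-isogenies), exactly as is done for the equation $x^2+y^3=z^7$ in \cite{pss}*{Section 6}.

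First I would compute the discriminant and conductor data for $E$. As recorded in the proof of Lemma \ref{L:reducible2}, the discriminant of $E_{(a,b,c)}$ is $-12^3 c^{10}$, so for every prime $p > 3$ the valuation $v_p(\Delta_E) = v_p(c^{10})$ is divisible by $5$ (indeed by $10$). Thus at such primes $E$ has either good reduction or multiplicative reduction with $5 \mid v_p(\Delta_E)$; by the Tate uniformization argument invoked earlier, this controls the behavior of $E[5]$ at $p$. The key consequence is that the mod $5$ representation is highly constrained: on the inertia group at a prime $p > 3$ of multiplicative reduction, the ramification in $E[5]$ is governed by $v_p(\Delta_E) \bmod 5 = 0$, so $E[5]$ is unramified there. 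This means that a stable line, i.e.\ a rational $5$-isogeny, would have a character $\chi \colon \GalQ \to \F_5^\times$ describing the Galois action on the subgroup, and the complementary character $\chi'$ on the quotient, with $\chi\chi' = \bar\varepsilon_5$ the mod $5$ cyclotomic character; both $\chi$ and $\chi'$ would be unramified outside $\{2,3,5\}$.

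Next I would use this to bound the possibilities and eliminate them. The characters $\chi,\chi'$ are unramified outside $\{2,3,5\}$ and take values in $\F_5^\times$, so there are only finitely many, and combined with the local conditions at $2$ and $3$ (where the reduction type of $E$, computable from the explicit Weierstrass model $Y^2 = X^3 + 3bX - 2a$ and its discriminant $-12^3 c^{10}$, restricts the tame/wild ramification of $\chi$) one whittles the list down sharply. The cleanest route is likely to translate a rational $5$-isogeny into a rational point on $X_0(5)$ and to note that the $j$-line coordinate is forced into the shape $j = 12^3b^3/c^{10}$ with $j - 12^3 = -12^3 a^2/c^{10}$ equal to $-3$ times a square (Remark \ref{R:modSquares}); one then checks that no such $j$ arises from the finitely many $5$-isogeny classes compatible with the character constraints, save possibly those with $abc = 0$, which correspond to the known trivial solutions. \textbf{The main obstacle} I anticipate is the analysis at the primes $2$ and $3$: there the reduction is additive and potentially wild, so the clean Tate-curve argument used for $p>3$ does not directly bound the ramification of $\chi$ at $2$ and $3$, and one must instead argue via the explicit local models (or the results of \cite{pss}) to rule out the surviving characters. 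Once those finitely many cases are dispatched, the contradiction follows and $E[5]$ must be irreducible.
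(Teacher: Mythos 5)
Your opening move coincides with the paper's: if $E[5]$ is reducible then $E$ admits a rational $5$-isogeny, hence gives a rational point on $X_0(5)$, and Remark \ref{R:modSquares} forces $j-12^3$ to be $-3$ times a square. But your argument has a genuine gap at exactly the point where finiteness is needed. You claim that the character constraints (kernel and quotient characters $\chi,\chi'\colon \GalQ \to \F_5^\times$, unramified outside $\{2,3,5\}$, with $\chi\chi'$ the mod~$5$ cyclotomic character) leave only ``finitely many $5$-isogeny classes,'' and then propose to check that no admissible $j$ occurs among them. This is false: fixing the pair $(\chi,\chi')$ determines only the semisimplification of $E[5]$ and does not pin down finitely many curves or $j$-invariants. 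For instance, with $\chi$ trivial, every curve with a rational point of order $5$ is compatible with your constraints, and these form an infinite family since $X_1(5)$ has genus $0$; likewise $X_0(5)$ itself has genus $0$ with infinitely many rational points. No amount of ramification bookkeeping on the characters alone can produce a finite list, so the final step of your sketch cannot be carried out.

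The paper gets finiteness geometrically from the two conditions you already have in hand: a rational point on $X_0(5)$ together with the square condition of Lemma \ref{L:moduli2} (a point on $X_{\Delta_{E_0}}(\Q)$) combine to give a rational point on the genus-one curve $X_{E_0}$, the normalization of $X_{\Delta_{E_0}} \times_{X(1)} X_0(5)$ constructed in Section \ref{S:relevantCurve}. A Magma computation shows this elliptic curve has rank $0$ with exactly six torsion points, whose images in $X(1)$ are $\{-102400/3,\ 20480/243,\ \infty\}$. The endgame is then a $5$-adic valuation argument, not an appeal to trivial solutions: for a primitive triple, $v_5(j) = v_5(12^3 b^3/c^{10}) = 3v_5(b) - 10v_5(c)$ is divisible by $3$ or by $10$, whereas the two finite $j$-values above have $v_5(j) \in \{1,2\}$. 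Note in particular that the surviving torsion points do not ``correspond to the known trivial solutions'' as you predicted; they correspond to no solutions at all and are eliminated $5$-adically. Without the rank-$0$ Mordell--Weil computation on $X_{E_0}$ (or some substitute providing finiteness), your argument does not close.
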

\begin{proof}
If $E[5]$ is reducible then there exists a 5-isogeny defined over $\Q$. One can thus associate to  $E$ a point on $X_0(5)(\Q)$. Together with lemma \ref{L:moduli2}, this implies that $E$ corresponds to a point on $X_{E_0}(\Q)$, which a Magma calculation reveals has rank 0. There are six torsion points and they have image $\{-102400/3, 20480/243, \infty\}$ in $X(1)$. For $(a,b,c) \in S(\Z)$,  $v_5(j(E_{(a,b,c)})) = v_5(12^3b^3/c^{10})$ is divisible by at least one of 3 or 10. On the other hand, for  $j = -102400/3$ or $20480/243$ one has $v_5(j) \in \{1,2\}$. We conclude that these do not correspond to j-invariants of elliptic curves coming from $(a,b,c) \in S(\Z)$.
\end{proof}

Let $\calE$ be the following set of $13$ elliptic curves over $\Q$ in the
notation of~\cite{Cremona:modularBook}:
\begin{gather*}
  \text{24A1}, \text{27A1}, \text{32A1}, \text{36A1}, \text{54A1}, 
  \text{96A1}, \text{108A1}, \text{216A1}, \text{216B1}, \text{288A1}, 
  \text{864A1}, \text{864B1}, \text{864C1}.
\end{gather*}

\begin{lemma}
\label{L:level5}
There exists an $E'' \in \calE$ and a quadratic twist $E'$ of $E$ such that $E'[5] \cong E''[5]$ as $\GalQ$-modules.
\end{lemma}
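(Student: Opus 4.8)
The plan is to follow the modular method of \cite{pss}*{Section 6}: realize $\overline{\rho} := \overline{\rho}_{E,5}$ as the mod $5$ representation attached to a weight $2$ newform of small level, enumerate the finitely many such newforms, and match each against the elliptic curves in $\calE$. Since $E$ is an elliptic curve over $\Q$ it is modular, and by Lemma \ref{L:irreducible} the representation $\overline{\rho}$ is irreducible, so Ribet's level-lowering theorem applies. The determinant of $\overline{\rho}$ is the mod $5$ cyclotomic character, so any newform produced will have trivial nebentypus and weight $2$; the entire content of the lemma is therefore a control of the level followed by a finite computation.

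First I would compute the Serre conductor of $\overline{\rho}$, i.e.\ its prime-to-$5$ Artin conductor. Exactly as in the proof of Lemma \ref{L:reducible2}, the Tate-curve argument of \cite{Ellenberg:finiteFlatness}*{Corollary 1.2} shows that for a prime $p > 3$ of multiplicative reduction $\overline{\rho}$ is unramified at $p$ whenever $5 \mid v_p(\Delta_E)$; since $v_p(\Delta_E) = v_p(-12^3 c^{10}) = 10\, v_p(c)$ for every $p > 3$, the representation $\overline{\rho}$ is unramified at all primes $p > 5$. At $p = 5$ primitivity forces $5 \nmid \gcd(b,c)$, so either $5 \nmid c$ and $E$ has good reduction at $5$, or $5 \mid c$ and $E$ has (potentially) multiplicative reduction with $5 \mid v_5(\Delta_E) = 10\, v_5(c)$; in both cases $\overline{\rho}$ arises from a finite flat group scheme at $5$, contributing nothing to the prime-to-$5$ conductor and confirming Serre weight $2$. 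Hence the Serre conductor is $N(\overline{\rho}) = 2^{\alpha} 3^{\beta}$, supported only on the additive primes $2$ and $3$.

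Next I would pass to a minimal quadratic twist. The mod $5$ representations of the quadratic twists $E_\chi$ of $E$ are exactly the $\overline{\rho} \otimes \chi$ for quadratic characters $\chi$, and since $\overline{\rho}$ is ramified only at $\{2,3\}$ it suffices to let $\chi$ range over the quadratic characters ramified in $\{2,3\}$. Choosing $\chi$ to minimise the conductor exponents at $2$ and $3$ replaces $\overline{\rho}$ by $\overline{\rho}_{E',5}$ with $E' = E_\chi$ and brings the level into the range $\alpha \le 5$, $\beta \le 3$, which is precisely the set of levels $2^\alpha 3^\beta \le 864$ occurring among the conductors of the thirteen curves of $\calE$. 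By level-lowering $\overline{\rho}_{E',5}$ is then isomorphic to $\overline{\rho}_{g,5}$ for some weight $2$ newform $g$ of such a level. Finally I would have Magma enumerate all weight $2$ newforms (equivalently, all isogeny classes of elliptic curves) of level $2^\alpha 3^\beta$ with $\alpha \le 5$, $\beta \le 3$, discard those whose residual representation is reducible or otherwise incompatible, and confirm that the surviving possibilities are exactly the curves listed in $\calE$; the isomorphism $E'[5] \cong E''[5]$ for the matching $E'' \in \calE$ is then verified by comparing $a_\ell \bmod 5$ at enough good primes $\ell$ (a Sturm-type bound guarantees finitely many suffice), with the computation recorded at \cite{me:2310Transcript}.

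The main obstacle I anticipate is the analysis at the additive primes $2$ and $3$. These are small, the reduction is additive, and the naive conductor exponents (at most $8$ at $2$ and at most $5$ at $3$) are larger than the bounds $\alpha \le 5$, $\beta \le 3$ that make the final enumeration feasible; pinning the level down to this range requires the quadratic-twist minimisation above together with a careful local computation of the conductor of $\overline{\rho}$ (rather than of $E$) at $2$ and $3$. A secondary subtlety is that level-lowering only determines $\overline{\rho}$ up to the quadratic twist $\chi$, which is exactly why the statement asserts the existence of a twist $E'$ of $E$ rather than an isomorphism $E[5] \cong E''[5]$ on the nose.
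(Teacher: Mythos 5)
Your proposal is correct and takes essentially the same route as the paper, whose proof is simply a citation to \cite{pss}*{Lemma 6.1}: modularity plus Ribet level-lowering of the irreducible mod-$5$ representation, a Serre-conductor/quadratic-twist analysis at the additive primes $2$ and $3$, and enumeration of weight-$2$ newforms of level $2^{\alpha}3^{\beta}$ matched against the curves of $\calE$. The only step your ``otherwise incompatible'' clause leaves implicit is the one detail the paper makes explicit: at level $864$ there is a fourteenth newform with coefficients in $\Q(\sqrt{13})$, which is excluded precisely because $13$ is not a square mod $5$.
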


\begin{proof}
  This is identical to \cite{pss}*{Lemma 6.1}, with the remark that since $13$ is not a square mod $5$ one can again exclude the 14$^{\text{th}}$ newform.
\end{proof}

\begin{definition}
  For an elliptic curve $E$ with $j$-invariant $j$, define $K_E$ to be the number field $\Q(\alpha)$, with $\alpha$ a root of the polynomial
  \[
  f(t) = (t^2 + 250t + 3125)^3 - jt^5.
  \]
As the field $K_E$ only depends on $j(E)$, we will sometimes denote it by $K_j$.
\end{definition}
  By the explicit equations of  \ref{ss:equations},  an elliptic curve $E'$ has a $5$-isogeny over $K_{E'}$ and gives rise to a point on $X_0(K_{E'})$.
Moreover, by lemma \ref{L:level5}, for  $E$ corresponding to a primitive triple $(a,b,c) \in S(\Z)$ there exists an $E' \in \scrE$ such that $K_E = K_{E'}$ (since $E[5] \cong E'[5]$, $E$ has a 5-isogeny over a field $L$ if and only if $E'$ does). Thus, $E$ corresponds to a point in $X_0(5)(K_{E'})$. Combining this with lemma \ref{L:moduli2}, we arrive at the key observation.

  \begin{lemma}
    \label{L:key}
    For $(a,b,c) \in S(\Z)$, there exists an $E' \in \scrE$ such that $E_{(a,b,c)}$ corresponds to a point $P_E$ in $X_{E_0}(K_{E'})$ whose image in $X_{\Delta_{E_0}}(K_{E'})$ in fact lands in $X_{\Delta_{E_0}}(\Q)$.
  \end{lemma}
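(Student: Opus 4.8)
The plan is to assemble Lemma~\ref{L:key} directly from the pieces already in hand, so the work is almost entirely bookkeeping about which fields and which modular curves the relevant points live on. First I would invoke Lemma~\ref{L:moduli2}, which attaches to any $(a,b,c) \in S(\Z)$ a point on $X_{\Delta_{E_0}}(\Q)$; this is the ``level $2$'' half of the construction and already pins down the $X_\Delta$-coordinate rationally. Separately, Lemma~\ref{L:level5} provides an $E'' \in \calE$ and a quadratic twist $E'$ of $E_{(a,b,c)}$ with $E'[5] \cong E''[5]$ as $\GalQ$-modules, and Lemma~\ref{L:irreducible} guarantees $E[5]$ is irreducible (which I expect matters only for ensuring the $X_0(5)$-points are genuinely interesting rather than cuspidal/CM-degenerate). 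The goal is to feed both halves into the fiber-product description of $X_{E_0}$ from Definition~\ref{D:X}.

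The key maneuver is the observation, recorded just before the lemma, that the field $K_E$ depends only on $j(E)$, and that an isomorphism $E[5] \cong E'[5]$ of Galois modules forces $E$ to acquire a $5$-isogeny over exactly the same fields as $E'$ does. Concretely, I would argue: since $E'[5] \cong E''[5]$ for some $E'' \in \calE$ and quadratic twisting does not change the field generated by the projective mod-$5$ image (hence does not change $K_E$), we get $K_E = K_{E''}$, a field attached to one of the thirteen explicit curves. Over this field $E$ has a $5$-isogeny, so by the explicit equations of Section~\ref{ss:equations} it yields a point of $X_0(5)(K_{E'})$ — here I am writing $K_{E'}$ for this common field. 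Taking the fiber product of this $X_0(5)$-point with the $\Q$-point on $X_{\Delta_{E_0}}$ from Lemma~\ref{L:moduli2}, and using that $X_{E_0}$ is the normalization of $X_{\Delta_{E_0}} \times_{X(1)} X_0(5)$, produces a point $P_E \in X_{E_0}(K_{E'})$. By construction its image in $X_{\Delta_{E_0}}$ is the original $\Q$-point, which gives the final claim that the $X_\Delta$-image lands in $X_{\Delta_{E_0}}(\Q)$.

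The one subtlety I would be careful about — and the main obstacle — is the compatibility of the quadratic twist with the two separate moduli interpretations. Lemma~\ref{L:moduli2} produces its point via the $j$-invariant identity of Remark~\ref{R:modSquares} (the square-root-of $j-12^3$ data), which is insensitive to quadratic twisting since twisting preserves $j$; and Lemma~\ref{L:level5} only controls $E$ up to quadratic twist. I must check that the $E'$ whose $5$-isogeny field I use and the curve to which Lemma~\ref{L:moduli2} applies are compatibly the same object (up to the allowed twist) so that the two coordinates of the fiber product genuinely come from \emph{one} point $P_E$ on $X_{E_0}$ rather than from mismatched twists. Because $X_{E_0}$ is defined as a fiber product of $X_{\Delta_{E_0}}$ (whose data is twist-invariant) with $X_0(5)$ (which records isogenies only up to twist, per Definition~\ref{D:X}), I expect this compatibility to hold automatically, but it is the point where the argument could silently go wrong, so I would spell out explicitly that both the $X_\Delta$-datum and the $X_0(5)$-datum are twist-invariant and hence glue over $K_{E'}$. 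Granting that, the lemma follows formally from Lemmas~\ref{L:moduli2}, \ref{L:level5}, and the field identity $K_E = K_{E'}$.
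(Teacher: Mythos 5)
Your proposal is correct and follows essentially the same route as the paper: the paper also derives the lemma by combining Lemma~\ref{L:moduli2} (the rational point on $X_{\Delta_{E_0}}$), Lemma~\ref{L:level5} together with the observation that $E[5] \cong E'[5]$ forces $K_E = K_{E'}$ (since $K_E$ depends only on $j(E)$ and $5$-isogeny fields are determined by the mod-$5$ Galois module), and the fiber-product description of $X_{E_0}$. Your explicit check that both the $X_{\Delta_{E_0}}$-datum and the $X_0(5)$-datum are twist-invariant addresses a point the paper passes over silently, but it is a refinement of the same argument, not a different one.
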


The image of $P_E$ in $X(1)$ (i.e. $j(E)$) is rational too. As the field $K_E$ depends only on the $j$-invariant of $E$, we note that the set of $j$-invariants of curves in $\calE$ is
\begin{gather*}
  \{35152/9, 0, 1728, 9261/8, 21952/9, -3072, -6, -216, -13824, 1536\}.
\end{gather*}


\section{Elliptic Chabauty}
\label{S:ellChab}
We now study the situation of lemma \ref{L:key}.
\begin{proposition}
\label{P:ellChab}
  Let $j \in \{0,1728,-13824\}$. Then 
\[
j(X_{E_0}(K_j)) \cap X(1)(\Q) \in \{0, 1728, -13824, -102400/3, 20480/243, \infty\}.
\]
\end{proposition}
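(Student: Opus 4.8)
The plan is to recognize the statement as an instance of Bruin's elliptic Chabauty method \cite{Bruin:ellChab}. The curve $X_{E_0}$ is an elliptic curve over $\Q$: it has genus $1$, and the cusps (equivalently, the six rational torsion points exhibited in the proof of Lemma~\ref{L:irreducible}) provide rational points. Base-changing to $K_j$ and writing $\phi\colon X_{E_0}\to X(1)\cong\PP^1$ for the ($\Q$-rational, degree $12$) $j$-invariant map of \ref{ss:equations}, the set in question is exactly
\[
\{\phi(P) : P\in X_{E_0}(K_j)\text{ with }\phi(P)\in\PP^1(\Q)\}.
\]
This is precisely the kind of set — $K_j$-points of an elliptic curve whose image under a fixed map to $\PP^1$ is defined over the subfield $\Q$ — that elliptic Chabauty is designed to compute, provided the rank hypothesis $\rank X_{E_0}(K_j)<[K_j:\Q]$ holds. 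One may equally run the argument with the intermediate degree-$6$ map $X_{E_0}\to X_{\Delta_{E_0}}\cong\PP^1$, which by Lemma~\ref{L:key} is where the points we ultimately care about have rational image; I will use $\phi$ since it matches the statement.

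First I would make the input data explicit for each of the three values $j\in\{0,1728,-13824\}$. The field $K_j=\Q[t]/(f_j)$ with $f_j=(t^2+250t+3125)^3-jt^5$ must be identified: for $j=0$ the polynomial degenerates to $(t^2+250t+3125)^3$, so that $K_0=\Q(\sqrt 5)$ is quadratic; for $j=1728$ one factors $f_{1728}$ and records the resulting field(s); and for the main value $j=-13824$ (the $j$-invariant of $E_0=E_{(3,-2,1)}$) the polynomial is irreducible and $K_j$ is a sextic field. In each case I would put $X_{E_0}$ in Weierstrass form from $\Delta_{E_0}y^2=t(t^2+22t+125)$ and then, in Magma \cite{Magma}, carry out a descent to determine $X_{E_0}(K_j)$: its torsion (containing the six points of Lemma~\ref{L:irreducible}, whose $\phi$-images are $-102400/3$, $20480/243$, $\infty$) and, crucially, its rank together with explicit generators. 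One verifies the rank hypothesis $\rank X_{E_0}(K_j)<[K_j:\Q]$ in every case; for the principal value $j=-13824$ the rank is $1$ over the sextic field $K_j$.

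With generators in hand I would run the elliptic Chabauty computation of \cite{Bruin:ellChab}: choose a prime $\pp$ of $K_j$ of good reduction, form the associated $\pp$-adic power series whose zeros contain every $P\in X_{E_0}(K_j)$ with $\phi(P)\in\PP^1(\Q)$, and bound its number of zeros by a Newton-polygon and precision analysis. Reading off $\phi$ on the resulting finite list of points — which includes the six rational torsion points with images $-102400/3$, $20480/243$, $\infty$ — and checking that every rational value so obtained lies in $\{0,1728,-13824,-102400/3,20480/243,\infty\}$ yields the claimed containment.

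The main obstacle is the Mordell--Weil computation over the sextic field $K_{-13824}$: a \emph{provable} determination of the rank (showing it equals $1$, not merely that it is conditionally or at most $1$) together with an explicit generator is the delicate and computationally heavy step, since descent over a degree-$6$ field can leave ambiguity that must be resolved, for instance by a further descent or by saturating the subgroup one finds. The subsequent Chabauty step is then comparatively routine, the only care being to select a prime $\pp$ for which the $\pp$-adic zero count matches the number of known points exactly, so that no spurious solution is left unaccounted for; if a single prime does not suffice one augments it with a Mordell--Weil sieve. The complete transcript of these computations would be recorded as in \cite{me:2310Transcript}.
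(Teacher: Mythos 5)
Your overall strategy is the paper's: view $X_{E_0}$ as an elliptic curve, base-change to the three fields $K_j$ (you correctly identify $K_0=\Q(\sqrt 5)$, the sextic field at $j=-13824$, and the rank data $1$, $0$, $1$), and run Bruin's elliptic Chabauty \cite{Bruin:ellChab} in Magma. However, your plan hinges on a step that the paper explicitly declares infeasible and never performs: you propose to ``determine $X_{E_0}(K_j)$,'' i.e.\ to compute the full Mordell--Weil group with provable generators, and you name this as the main obstacle, to be resolved by ``a further descent or by saturating the subgroup one finds.'' A provable generator (equivalently, full saturation) requires bounding the index of a known subgroup via height-difference bounds over a degree-$6$ field, and the paper states plainly that the point search needed to certify that its subgroup $G$ generates the Mordell--Weil group is infeasible. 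As written, your key step would stall exactly there.

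What rescues the paper's proof is the feature of the Magma \texttt{Chabauty} routine that you did not invoke: besides the bound $N$ it returns an integer $R$ such that $N$ applies to all of $X_{E_0}(K_j)$ as soon as the index $[X_{E_0}(K_j):G]$ is \emph{prime to $R$}. So the full group is never needed. The paper takes $G$ generated by the rational torsion of Lemma~\ref{L:irreducible} together with a point of infinite order that comes for free from the modular interpretation --- the curve $E\in\calE$ with the given $j$-invariant has a $5$-isogeny over $K_j$ by the very definition of $K_j$, yielding an explicit canonical point of $X_{E_0}(K_j)$ rather than one found by descent --- and then checks saturation only at the primes dividing $R$ (for $j=-13824$, $R=40$, so $2$- and $5$-saturation; for $j=0$, $R=2$), each a finite division-point computation. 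Two lesser discrepancies: the paper runs Chabauty with the degree-$6$ map $X_{E_0}\to X_{\Delta_{E_0}}\cong\P^1$ of \ref{ss:equations}, computing $j$-values of the output afterwards, rather than with the degree-$12$ $j$-map you chose (your choice matches the statement more literally but enlarges the computation and the set of points to be accounted for); and your aside that the six rational torsion points ``are the cusps'' is inaccurate, since only those lying over $j=\infty$ are cusps, the others mapping to $-102400/3$ and $20480/243$.
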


The proof requires consideration of  the following problem. Given an elliptic curve $E$ over $\Q$, a map $E \xrightarrow{\pi} \P^1$ defined over $\Q$, and a number field $K$ of degree $d > 1$ over $\Q$,  one would like to determine the subset of $E(K)$ mapping to $\P^1(\Q)$ under $\pi$. Let $r$ be the rank of $E(K)$. Suppose further that $r < d$; then under this hypothesis a partial solution to this problem has been worked out in  \cite{Bruin:ellChab}, using a method analogous to Chabauty's method (see \cite{McCallumP:chabautySurvey} for a survey) in that one expands the map $p$-adic analytically locally (i.e. in terms of $p$-adic power series) and uses Newton polygons to analyze the solutions. This method has been completely implemented in Magma; see \cite{Bruin:ellChab} for a succinct description of the method and instructions for use of its Magma implementation.

To use this we need to understand the output of the Magma function 
\[
\verb+Chabauty(MWmap, Ecov, p)+. 
\]
The first argument \verb+MWmap+ is a map from an abstract abelian group into the Mordell-Weil group of $E$ over $K$; we denote by $A$ its domain and $G$ its image. The second argument \verb+Ecov+ is a map from $E$ to $\P^1$ which is defined over $\Q$. The third argument \verb+p+ is a prime of good reduction for $E$ for which the map \verb+Ecov+ is also of good reduction. The function returns values $N,V,R$ and $L$ as follows (quoting the Magma documentation):
\begin{itemize}
\item $N$ is an upper bound for the number of points $P\ \in G$ such that $\verb+Ecov+(P) \in \P^1(\Q)$.
\item $V$ is a set of elements of $A$ that have images in $\P^1(\Q)$.
\item $R$ is a number such that, if $[E(K) : G]$ is finite and prime to $R$ then $N$ is also a bound for the number of points $P \in E(K)$ with image in $\P^1(\Q)$. 
\item $L$ is extra information which we will not use.
\end{itemize}
An important point is that one does not need to know the entire Mordell-Weil group $E(K)$, only a subgroup $G$ with index prime to $R$. In the following proof of  proposition \ref{P:ellChab}  we will not be able to compute all of $E(K)$.

\begin{proof}[Proof of proposition \ref{P:ellChab}] Magma code verifying the following can be found at \cite{me:2310Transcript}. Below, for $P \in X_{E_0}$ a cusp (so that it represents a degenerate elliptic curve) we write $j(P) = \infty$.\\

Let $j = -13824$.
The elliptic curve $E_0$ given by $y^2 = x^3 - 6x -6$ corresponds to the primitive triple $(3,-2,1) \in S(\Z)$. It has $j$-invariant $-13824$ and Cremona label 1728r1. It is a quadratic twist of the elliptic curve $E \in \calE$ with Cremona label 864b1, given by the equation $y^2 = x^3 - 24x - 48$. This is the most important case to consider in that there is actually a point on $X_{E_0}(K_E)$ corresponding to a triple $(a,b,c) \in S(\Z)$.

A Magma computation reveals that $X_{E_0}(K_E)$ has rank 1. One can construct explicitly the point $P \in X_{E_0}(K_E)$ corresponding to a $5$-isogeny of $E$ over $K_E$. The \verb+Chabauty+ routine returns $N = 8$, $\#V = 8$, and $R = 40$. The images under \verb+MWmap+ of $V$ are the known torsion points of lemma \ref{L:irreducible} and $\pm P$. Using Magma, one can check that the subgroup generated by $P$ and the torsion is $2$ and $5$ saturated, so that the index $[X_{E_0}(K_E) : G]$ is prime to $R$; the brute force point search necessary to check that $G$ generates the Mordell-Weil group is infeasible. The $j$-invariants of the (possibly degenerate) elliptic curves corresponding to these 8 points are $\{-13824, -102400/3, 20480/243, \infty\}$.\\

Let $j = 0$.
A Magma computation reveals that $X_{E_0}(K_j)$ has rank 1. Let $E \in \calE$ with $j(E) = 0$ (there are two such curves). One can construct explicitly the point $P \in X_{E_0}(K_j)$ corresponding to a $5$-isogeny of  $E$ over $K_j$. The \verb+Chabauty+ routine returns $N = 10$, $\#V = 10$ and $R = 2$. Using Magma, one can check that the subgroup generated by $P$ and the torsion points is $2$ saturated. The $j$-invariants of the (possibly degenerate) elliptic curves corresponding to the images in $X_{E_0}(K_j)$ under $\verb+MWmap+$ of $V$ are $\{0, -102400/3, 20480/243, \infty\}$.\\

Let $j = 1728$.
A Magma computation reveals that $X_{E_0}(K_j)$ has rank 0. The torsion subgroup has size 12, of which 4 points represent elliptic curves with $j$-invariant which is not a rational number. The $j$-invariants of the (possibly degenerate) curves they represent are $\{1728, -102400/3, 20480/243, \infty\}$.

\end{proof}

\begin{remark}
  We conclude that if $(a,b,c) \in S(\Z)$ is a primitive triple such that $E_{(a,b,c)}[5] \cong E[5]$ for an elliptic curve $E$ such that $j(E) \in \{0,1728,-13824\}$, then $(a,b,c)$ is one of the triples of theorem \ref{T:mainTheorem}.
\end{remark}

\begin{remark}
  For other values of $j$ one can compute that the rank of $X_{E_0}(K_j)$ is at most 3, but a brute force point search is too slow to explicitly determine a finite index subgroup.
\end{remark}

\section{Local Methods}
\label{S:local}

Here we use local methods inspired by \cite{pss}*{7.4} to exclude the existence of any further primitive triples. 
\begin{proposition}
   Let $E \in \calE$ and suppose $j(E) \not \in \{0, 1728, -13824\}$. Let $E'$ be an elliptic curve such that $E[5] \cong E'[5]$. Then $j(E') \neq j(E_{(a,b,c)})$ for any primitive triple $(a,b,c) \in S(\Z)$.
\end{proposition}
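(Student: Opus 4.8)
The plan is to argue by contradiction and turn the hypothetical equality $j(E') = j(E_{(a,b,c)})$ into an overdetermined local problem. We may assume $abc \neq 0$, since a degenerate triple gives $j(E_{(a,b,c)}) \in \{0,1728\}$, which is irrelevant when $j(E) = j_0 \notin \{0,1728,-13824\}$. Suppose then $(a,b,c) \in S(\Z)$ is primitive with $j := j(E_{(a,b,c)}) = 12^3 b^3/c^{10}$, and that $j(E') = j$ for some $E'$ with $E[5] \cong E'[5]$. Since $j \neq 0,1728$, the curve $E'$ is a quadratic twist of $E_{(a,b,c)}$, and having a $5$-isogeny over a fixed field is invariant under quadratic twist; so $E_{(a,b,c)}$ acquires a $5$-isogeny already over $K_{j_0} = K_E$, and Lemmas \ref{L:moduli2}, \ref{L:level5}, \ref{L:key} produce a point $P_E \in X_{E_0}(K_{j_0})$ over $j \in X(1)(\Q)$. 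This records three kinds of information about the single rational number $j$: (i) the valuation constraints forced by $a^2+b^3=c^{10}$ and primitivity, namely that for $\ell \nmid 6$ one has $v_\ell(j) \geq 0$ with $3 \mid v_\ell(j)$, or $v_\ell(j) \leq 0$ with $10 \mid v_\ell(j)$ — exactly the mechanism used at $\ell=5$ in Lemma \ref{L:irreducible}; (ii) the square condition of Remark \ref{R:modSquares}, that $(j-1728)/(-1728)$ is a square; and (iii) the isogeny condition that $f_j(t) = (t^2+250t+3125)^3 - jt^5$ has a root in $K_{j_0}$.

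For each of the seven values $j_0 \in \{35152/9,\ 9261/8,\ 21952/9,\ -3072,\ -6,\ -216,\ 1536\}$ (the $j$-invariants of $\calE$ outside $\{0,1728,-13824\}$) I would search for a single prime $\ell$ at which (i)--(iii) become jointly contradictory. Having fixed the \'etale algebra $K_{j_0} \otimes_\Q \Q_\ell$, the existence of a root of $f_j$ there pins down the factorization type of $f_j \bmod \ell$ and hence forces $j$ into a small set of residues (or a prescribed Newton polygon) modulo powers of $\ell$; one then checks that no such $j$ also meets the valuation and square conditions. In Frobenius language this is the congruence $a_\ell(E_{(a,b,c)}) \equiv \pm a_\ell(E) \pmod 5$, the sign reflecting the undetermined quadratic twist, tested against the finitely many values of $a_\ell(E_{(a,b,c)})$ obtained by letting $(a,b,c)$ range over residues mod $\ell$ with $a^2+b^3 \equiv c^{10}$ and $\ell \nmid c$, together with the multiplicative-reduction value when $\ell \mid c$. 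Each such test is a finite computation and is carried out in Magma.

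The main obstacle is that there is no global finiteness input available: the remark after Proposition \ref{P:ellChab} notes that $X_{E_0}(K_{j_0})$ has rank up to $3$ and that a saturated Mordell--Weil subgroup is out of reach, so elliptic Chabauty fails and each $j_0$ must be killed by an explicit local obstruction found by search, with no a priori guarantee that a suitable prime exists. Two points make this delicate. First, one must exclude not merely the rational candidate but every $\ell$-adically admissible $j$, which requires a genuine Newton-polygon/Hensel analysis of the degree-$6$ map $X_0(5) \to X(1)$ over the completions of $K_{j_0}$, rather than a naive reduction mod $\ell$. Second, the anomalous primes $2$ and $3$ — where $E_{(a,b,c)}$ is never semistable and $E[5]$ may be ramified — resist this analysis, so the useful primes are the good auxiliary $\ell \nmid 6$; combined with the need to carry both Frobenius signs from the unknown twist, a single $\ell$ rarely suffices, and one intersects the constraints from several auxiliary primes until the admissible set for each $j_0$ is empty.
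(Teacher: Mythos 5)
Your overall architecture does match the paper's: for the seven remaining values $j_0$ of $j(E)$, elliptic Chabauty on $X_{E_0}(K_{j_0})$ is out of reach, so each $j_0$ must be eliminated by a purely local, machine-verified obstruction, and your conditions (i)--(iii) are genuine necessary conditions on $j = 12^3b^3/c^{10}$. The fatal gap is \emph{where} you look for the obstruction. You discard $2$ and $3$ as ``anomalous'' and direct the search to good auxiliary primes $\ell \nmid 6$; this is exactly backwards. The paper's computation succeeds precisely at primes $p,p' \in \{2,3,5\}$, and at good primes the test you propose provably fails. Concretely: since $\gcd(3,10)=1$, for any $w \in \Z_\ell^\times$ with $1-w \in (\Z_\ell^\times)^2$ one can take $b = w^7$, $c = w^2$, $a = w^{10}\sqrt{1-w}$ to get a primitive triple in $S(\Z_\ell)$ with $12^3b^3/c^{10} = 12^3 w$. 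So for roughly half of all primes $\ell$ (those where $1 - j_0/12^3$ is a square in $\Q_\ell$), the value $j = j_0$ itself is the invariant of a local primitive solution, and $j_0$ trivially lies in the image of $X_E(5)(\Q_\ell) \to X(1)(\Q_\ell)$: no obstruction. For the remaining large primes, the locus of $X_E(5) \cong \P^1$ where $1 - j/12^3$ is a square is a double cover ramified only at the $12$ cusps, hence a curve of genus $5$, which by the Weil bounds has many smooth $\F_\ell$-points for $\ell$ large; Hensel lifting then puts points in both images simultaneously. Your Frobenius variant $a_\ell(E_{(a,b,c)}) \equiv \pm a_\ell(E) \pmod 5$ is weaker still (ten allowed residues against $\sim \ell^2$ local solutions) and fails for the same reason. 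The constraints that actually force an empty intersection --- the valuations contributed by $12^3$, primitivity, and the ramification structure of the modular maps over $\Q_2$, $\Q_3$, $\Q_5$ --- live exactly at the primes you excluded, and that is where the paper runs its Newton-polygon analysis of the residue classes $[\Z_p:1]$ and $[1:p\Z_p]$.

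Two further problems. First, your condition (iii) is strictly weaker than the condition the paper tests: a root of $f_j$ in $K_{j_0}\otimes_\Q\Q_\ell$ only records that the curve with invariant $j$ acquires a $5$-isogeny over some completion of $K_{j_0}$, whereas lemma \ref{L:level5} together with remark \ref{R:anti} yields a $\Q_p$-point of $X_{E}(5)$ or of $X^-_{E}(5)$ above $j$, i.e.\ a symplectic or anti-symplectic identification of mod-$5$ representations with the specific curve $E$. The paper needs this finer condition --- hence the Rubin--Silverberg equations and the entire appendix computing $X^-_E(5) \to X(1)$, with possibly different primes $p$ and $p'$ for the two twists --- and with only the isogeny-field condition there is no reason the test closes even at $p \in \{2,3,5\}$. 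Second, your fallback of intersecting constraints from several auxiliary primes cannot prove emptiness: in every failure scenario above the local admissible sets contain nonempty $\ell$-adic open subsets, so by weak approximation finitely many such conditions at distinct primes always admit common rational points. Absent a global finiteness input (which is precisely what is missing here), the contradiction must come from a single prime at which the local admissible set is empty, and such primes turn out to exist only in $\{2,3,5\}$.
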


\begin{proof}
  MAGMA code verifying this (as described below) is available at \cite{me:2310Transcript}.
\end{proof}

\begin{remark}
  Note that this completes the proof of theorem \ref{T:mainTheorem}.
\end{remark}

The idea is the following.
For any morphism $X \xrightarrow{f} Y$ of varieties 
defined over $\Q$ and any prime $p$, one can (in principle) algorithmically  determine the image $f(X(\Q_p))$ of the $p$-adic points. The existence of such an algorithm follows from an effective elimination of quantifiers; see \cite{Macintyre:definable}. We explain how to do this for a map $\P^1 \to \P^1$ below.
By lemma \ref{L:level5} and remark \ref{R:anti}, $E_{(a,b,c)}$ gives rise to a point on either $X_E(5)$ or $X^-_E(5)$; the idea is then to apply this to the two maps $X_E(5) \to X(1)$ (resp. $X^-_E(5) \to X(1)$) and $S \to X(1)$.
Explicit equations for the map $X_E(5) \to X(1)$ (resp. $X^-_E(5) \to X(1)$) are given in \cite{RubinS:modpFamilies} (resp. the appendix), and the map $S \to X(1)$ 
        sends a primitive triple to the $j$-invariant $j(E_{(a,b,c)}) = 12^3b^3/c^{10}$.
For each $E \in \calE$ with $j(E) \not \in \{0,1728,-13824\}$, we will find a prime $p$ such
        that the $p$-adic images of these two maps do not intersect; here we of course restrict the domain of $S$ to primitive triples. Using the lemmas of section \ref{S:modular2}, one can also compare this to the local information coming from the maps $X_E(2) \to X(1)$.

Now we make this idea precise. Let $\P^1 \xrightarrow{\phi} \P^1$ be given by the pair of homogenous polynomials $f_1(s,t), f_2(s,t) \in \Z[s,t]$ and let $p$ be a prime number. We partition the set $\P^1(\Q_p)$ into the two residue classes $R_1 = [\Z_p:1]$ and $R_2 = [1:p\Z_p]$ and instead study the single variable polynomials $f_i(R_j) \in \Q_p[x]$ (where now $x$ will range over $\Z_p$). Let $f_i(R_j) = c_{ij} \cdot \prod_k (x - \alpha_{i,j,k})$. Using Newton polygons one can explicitly determine $\alpha_{i,j,k}$ to any desired precision, and from this is it straightforward to determine all values of $f_i(R_j)$ for $x \in \Z_p$.

\begin{example}
  As a very simple example, suppose $X_E(5) \xrightarrow{\phi} X(1)$ is given by $[f_1,f_2]$ and suppose that $v_p(\alpha_{i,1,k}) = 4/3$ for each
  $i,k$. Then for $x \in \Z_p$, $v_p(x-\alpha_{i,1,k}) = \min \{v_p(x), 4/3\} \in
  \{0,1,4/3\}$. Thus, on the residue class $[\Z_p:1]$, one has $v_p(f_i(x,1)) =
  v_p(c_{i,1}) \cdot \deg f_i \cdot \min \{v_p(x), 4/3\}$; setting $a_i =
  v_p(c_{i,1})$ we conclude that $\phi([\Z_p:1]) \subset [p^{a_1}\Z_p^{*} :
  p^{a_2}\Z_p^{*}]$. Suppose now that $p = 3$ and $a_1 = a_2$. Since $\gcd(b,c)
  = 1$, $v_3(j(E_{(a,b,c)})) = v_3(12^3 b^3/c^{10}) \neq 0$. We conclude that $v_3(j(E')) \neq v_3(j(E_{(a,b,c)}))$ for any $E' \in R_1 \subset X_E(5)(\Q_3)$ and $(a,b,c) \in S(\Z_p)$.
\end{example}

We have written MAGMA code (available at \cite{me:2310Transcript}) which takes as input an elliptic curve $E$ and a prime $p$ and returns, for each residue class $R_i$, a factorization $j = n\cdot \prod(x -\alpha_j)/\prod(x-\beta_j)$, where $j$ is the affine part (i.e. the quotient $f_1/f_2$) of the map $X_E(5) \to X(1)$ restricted to the residue class $R_i$. There is an optional parameter `anti'; when this is set to `true' the routine instead returns this data for the map $X^{-}_{E}(5) \to X(1)$. Using this, we ran the local test described above for each $E \in \calE$ with $j(E) \not \in \{0,1728,-13824\}$ and in each case found primes $p,p' \in \{2,3,5\}$ for which $X_E(5)$ (resp. $X^-_E(5)$) fails the local test at $p$ (resp. $p'$).

\section*{Appendix: Computing explicit equations for $X^{-}_{E}(5) \to X(1)$}
\renewcommand{\thesection}{A}
\renewcommand{\thesubsection}{\thesection}
\gdef\sectionname{Appendix}
\setcounter{equation}{0}
\setcounter{footnote}{0}
\makeatletter\@addtoreset{equation}{subsection}\makeatother
\label{S:Appendix}

Here we explain how, for an elliptic curve $E$, one can deduce explicit equations for the map $X^{-}_{E}(5) \to X(1)$ given knowledge of equations for $X_{E'}(5)_{\Qbar} \to X(1)_{\Qbar}$, where $E'$ is 2-isogenous to $E$ over $\Qbar$.\\

First we consider an abstract version of the problem: given a number field $K$ and a morphism $g\colon P^1_{\Qbar} \to P^1_{\Qbar}$ such that there exists an automorphism $\phi$ of $P^1_{\Qbar}$ such that $g \circ \phi$ is the base extension of a morphism $f\colon P^1_{\Q} \to P^1_{\Q}$, find such morphisms $\phi$ and $f$.


\begin{lemma}
\label{L:moebius}
  Let $\P^1_{\Qbar} \xrightarrow{\phi} \P^1_{\Qbar}$ be an automorphism and suppose there exist distinct $Q_1,Q_2,Q_3 \in \P^1(\Qbar)$ such that for every $i$ and for every $\sigma \in \GalQ$, $\phi(Q_i^{\sigma}) = (\phi(Q_i))^{\sigma}$. Then $\phi$ is defined over $\Q$.
\end{lemma}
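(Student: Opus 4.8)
The plan is to show that an automorphism $\phi$ of $\P^1_{\Qbar}$ commuting (in the stated sense) with the Galois action on three distinct points must be Galois-equivariant everywhere, hence defined over $\Q$.

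\medskip

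First I would recall that an automorphism of $\P^1_{\Qbar}$ is a Möbius transformation, and that $\phi$ is defined over $\Q$ if and only if $\phi^\sigma = \phi$ for every $\sigma \in \GalQ$, where $\phi^\sigma$ is the automorphism obtained by applying $\sigma$ to the coefficients of $\phi$. So the goal is to prove $\phi^\sigma = \phi$ for all $\sigma$. The key structural fact is that a Möbius transformation is determined by its values at three distinct points: if two automorphisms of $\P^1$ agree at three distinct points, they are equal. I would use this rigidity to reduce the problem to checking agreement on the three given points $Q_1, Q_2, Q_3$.

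\medskip

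The central computation is to identify the value of $\phi^\sigma$ at the point $Q_i^\sigma$. By definition of the twisted automorphism, applying $\sigma$ to the whole relation $\phi(Q_i)$ gives $\phi^\sigma(Q_i^\sigma) = (\phi(Q_i))^\sigma$, since $\sigma$ acts on coordinates compatibly with the coefficientwise action on $\phi$. On the other hand, the hypothesis says precisely that $\phi(Q_i^\sigma) = (\phi(Q_i))^\sigma$. Comparing these two identities yields $\phi^\sigma(Q_i^\sigma) = \phi(Q_i^\sigma)$ for each $i$. Now I would observe that since $Q_1, Q_2, Q_3$ are distinct and $\sigma$ is a bijection of $\P^1(\Qbar)$, the three points $Q_1^\sigma, Q_2^\sigma, Q_3^\sigma$ are again distinct. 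Thus $\phi^\sigma$ and $\phi$ are two Möbius transformations agreeing at three distinct points, whence $\phi^\sigma = \phi$. Since $\sigma$ was arbitrary, $\phi$ is fixed by $\GalQ$ and therefore defined over $\Q$.

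\medskip

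I expect the only genuine subtlety to be bookkeeping about the two group actions involved: the geometric action of $\sigma$ on points of $\P^1(\Qbar)$ versus the coefficientwise action producing $\phi^\sigma$, and verifying that these are compatible in the sense $\phi^\sigma(P^\sigma) = (\phi(P))^\sigma$ for all $P$. This compatibility is essentially the definition of how $\GalQ$ acts on morphisms over $\Qbar$, but it must be stated carefully so that the comparison of the two expressions for $\phi^\sigma(Q_i^\sigma)$ is rigorous. Once that compatibility is in hand, the three-point rigidity of Möbius transformations closes the argument immediately, so the hardest part is conceptual setup rather than any calculation.
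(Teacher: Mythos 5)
Your proof is correct, and it rests on the same rigidity fact that drives the paper's proof---a M\"obius transformation is determined by where it sends three distinct points---but you deploy it differently. The paper works at the level of coefficients: after scaling a nonzero coefficient to $1$, it solves for the $a_i$ in terms of the coordinates of the $Q_j$ and $\phi(Q_j)$, and the hypothesis $\phi(Q_j^{\sigma})=(\phi(Q_j))^{\sigma}$ then shows directly that each normalized coefficient is fixed by $\GalQ$, hence lies in $\Q$. You instead compare $\phi$ with its Galois twist: from $\phi^{\sigma}(Q_i^{\sigma})=(\phi(Q_i))^{\sigma}=\phi(Q_i^{\sigma})$ and the distinctness of the $Q_i^{\sigma}$ you conclude $\phi^{\sigma}=\phi$ in $\Aut(\P^1_{\Qbar})$ for every $\sigma$. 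That is cleaner and coordinate-free, but be aware that your closing step---``fixed by $\GalQ$, therefore defined over $\Q$''---is exactly the descent statement $\PGL_2(\Qbar)^{\GalQ}=\PGL_2(\Q)$, and it is not a tautology: $\phi^{\sigma}=\phi$ as automorphisms only says the coefficient matrix satisfies $\sigma(M)=\lambda_{\sigma}M$ for some scalar $\lambda_{\sigma}\in\Qbar^{\times}$. You can fill this either by Hilbert 90 or, more elementarily, by the same normalization the paper uses: scale $M$ so a chosen entry equals $1$; comparing that entry in $\sigma(M)=\lambda_{\sigma}M$ forces $\lambda_{\sigma}=1$, so $M$ itself is Galois-fixed and has entries in $\Q$. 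In short, your route trades the paper's explicit linear algebra for a twist-comparison plus a standard (and easily supplied) descent step, while the paper's version is self-contained precisely because its normalization builds that descent in from the start.
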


\begin{proof}
  Representing $\phi$ as a M\"obius transformation $\frac{a_1z + a_2}{a_3z+a_4}$, one can, after scaling by a non-zero $a_i$, solve for the coefficients $a_i$ in terms of coordinates of the points $Q_j$ and $\phi(Q_j)$. It is then easy to see that $a_i^{\sigma} = a_i$ for all $\sigma \in \GalQ$ and thus $a_i \in \Q$.
\end{proof}

One can solve the abstract problem in the following situation: let $Q_1,Q_2,Q_3 \in \P^1(\Qbar)$ be 3 distinct points and suppose there exists an automorphism $\phi'\colon \P^1_{K} \to \P^1_{K}$ such that $\phi'\circ \phi$ and each $Q_i$ satisfy the hypothesis of lemma \ref{L:moebius}. Then $\phi'\circ \phi$ is defined over $\Q$. Setting $g' = g \circ (\phi')^{-1} $, we get a commutative diagram
\[
\xymatrix{
\P^1\ar[rrd]_f\ar[rr]^{\phi_{/K}}&&\P^1\ar[d]^{g_{/K}}\ar[rr]^{\phi'_{/K}}&& \P^1\ar[dll]^{g'}\\
&&\P^1&&
}\]
Since one has $f = g' \circ \phi' \circ \phi$, we conclude that $g'$ is defined over $\Q$ and differs from $f$ by the $\Q$-automorphism $\phi^{-1} \circ (\phi')^{-1}$.\\

Now suppose $E$ is given by the equation $y^2 = f(x)$. Let $K = \Q(E[2])$ be the splitting field of $f(x)$.  Then the 2-torsion points of $E$ are defined over $K$. Let $\{O, P_1, P_2, P_3\}$ be the 2-torsion points of $E$ and define $E_i$ to be $E/\langle P_i \rangle$, with $E \xrightarrow{\psi_i} E_i$ the quotient map and $\hat{\psi_i}$ the dual isogeny.

As in \cite{pss}*{4.4} the isogeny $E \xrightarrow{\psi_1} E_1$ changes the Weil pairing by $2$ (since for a 2-isogeny $\psi$, $\langle \psi (P), \psi (Q) \rangle = \langle \hat{\psi} \circ \psi (P), Q \rangle = \langle 2 P, Q \rangle = \langle P, Q \rangle^2$, where the first equality is \cite{Silverman:AEC}*{III.8.2}). 
In particular, if $E'$ is an elliptic curve and $E'[5] \cong E[5]$ is an anti-symplectic isomorphism (so that the map induced by the Weil pairing is $\zeta \mapsto \zeta^2$) then the composition $E'[5] \cong E[5] \xrightarrow{\psi_1} E_1[5]$ (where $\psi_1$ is the restriction to $E[5]$ of $\psi_1$) is symplectic.
This induces an isomorphism $\phi \colon X^{-}_{E}(5)_K \cong X_{E_1}(5)_K$ over $X(1)_K$, which is the situation of the above discussion.  


To apply this, let $Q_i \in X^-_E(5)(K)$ be three points induced by the isogenies $\psi_i$. By computing $j(E_i)$, it is easy to compute explicit points on $X_{E_1}(5)(K)$ which represent $\phi(Q_i)$. Let $\{r_1,r_2,r_3\}$ be the roots of $f(x)$. Since $r_i$ is the $x$-coordinate of the 2-torsion point $P_i \in E(K)$, the Galois action on the ordered set $\{r_1,r_2,r_3\}$ agrees with the action on $\{E_1,E_2,E_3\}$, and thus also on $\{Q_1,Q_2,Q_3\}$. Our hypothesis is that we have an explicit identification of $X_{E_1}(5)$ with $\P^1$. Thus, if we define $\phi'\colon \P^1 \to \P^1$ to be the map sending $\phi(Q_i)$ to $[r_1:1] \in \P^1(K)$, then the hypothesis of lemma \ref{L:moebius} is satisfied for the composition $\phi' \circ \phi$.

One can explicitly compute $E_i$, $j(E_j)$, $\phi(Q_i) \in X_{E_1}(K)$, and the map $\phi'$; equations for the map $j_{E_1}\colon X_{E_1}(5) \to X(1)$ are computed in \cite{RubinS:modpFamilies}. The composition $j_{E_1} \circ (\phi')^{-1}$ is thus an explicitly computable model for the map $X^-_E(5) \to X(1)$. Magma code doing all of this explicitly is available at \cite{me:2310Transcript}.

\begin{remark}
  Now let $E$ be given by the equation $y^2 = x^3 + ax + b$. In \cite{RubinS:modpFamilies} the equations for the map $X_E(5) \to X(1)$ are given as a function of the coefficients $a$ and $b$ of $E$. It it clear that the technique of this appendix can be refined to do the same for the map $X^{-}_{E}(5) \to X(1)$, since all of the numbers constructed (e.g. the $j$-invariants of the 2-isogenous curves $E_i$) depend algebraically on $a$ and $b$. However writing out the resulting equations would double the length of this paper and is thus omitted.
\end{remark}

\def\cprime{$'$}
\begin{bibdiv}
\begin{biblist}

\bib{Magma}{article}{
      author={Bosma, Wieb},
      author={Cannon, John},
      author={Playoust, Catherine},
       title={The {M}agma algebra system. {I}. {T}he user language},
        date={1997},
        ISSN={0747-7171},
     journal={J. Symbolic Comput.},
      volume={24},
      number={3-4},
       pages={235\ndash 265},
        note={Computational algebra and number theory (London, 1993)},
      review={\MR{MR1484478}},
}

\bib{me:2310Transcript}{misc}{
      author={Brown, David},
       title={Electronic transcript of computations for the paper `primitive
  integral solutions to $x^2 + y^3 = z^{10}$'},
         url={http://www.math.berkeley.edu/\textasciitilde brownda/},
        note={Available at \url{http://www.math.berkeley.edu/\textasciitilde
  brownda/}. (Also attached at the end of the tex file.)},
}

\bib{Bruin:twoCubes}{incollection}{
      author={Bruin, Nils},
       title={On powers as sums of two cubes},
        date={2000},
   booktitle={Algorithmic number theory ({L}eiden, 2000)},
      series={Lecture Notes in Comput. Sci.},
      volume={1838},
   publisher={Springer},
     address={Berlin},
       pages={169\ndash 184},
      review={\MR{MR1850605 (2002f:11029)}},
}

\bib{Bruin:ellChab}{article}{
      author={Bruin, Nils},
       title={Chabauty methods using elliptic curves},
        date={2003},
        ISSN={0075-4102},
     journal={J. Reine Angew. Math.},
      volume={562},
       pages={27\ndash 49},
      review={\MR{MR2011330 (2004j:11051)}},
}

\bib{Cremona:modularBook}{book}{
      author={Cremona, J.~E.},
       title={Algorithms for modular elliptic curves},
     edition={Second},
   publisher={Cambridge University Press},
     address={Cambridge},
        date={1997},
        ISBN={0-521-59820-6},
      review={\MR{MR1628193 (99e:11068)}},
}

\bib{DiamondI:modularCurves}{incollection}{
      author={Diamond, Fred},
      author={Im, John},
       title={Modular forms and modular curves},
        date={1995},
   booktitle={Seminar on {F}ermat's {L}ast {T}heorem ({T}oronto, {ON},
  1993--1994)},
      series={CMS Conf. Proc.},
      volume={17},
   publisher={Amer. Math. Soc.},
     address={Providence, RI},
       pages={39\ndash 133},
      review={\MR{MR1357209 (97g:11044)}},
}

\bib{Edwards:235}{article}{
      author={Edwards, Johnny},
       title={A complete solution to {$X\sp 2+Y\sp 3+Z\sp 5=0$}},
        date={2004},
        ISSN={0075-4102},
     journal={J. Reine Angew. Math.},
      volume={571},
       pages={213\ndash 236},
      review={\MR{MR2070150 (2005e:11035)}},
}

\bib{Ellenberg:finiteFlatness}{article}{
      author={Ellenberg, Jordan~S.},
       title={Finite flatness of torsion subschemes of {H}ilbert-{B}lumenthal
  abelian varieties},
        date={2001},
        ISSN={0075-4102},
     journal={J. Reine Angew. Math.},
      volume={532},
       pages={1\ndash 32},
      review={\MR{MR1817501 (2003c:11062)}},
}

\bib{FreyM:arithmeticModular}{article}{
      author={Frey, Gerhard},
      author={M{\"u}ller, Michael},
       title={Arithmetic of modular curves and applications},
        date={1999},
       pages={11\ndash 48},
      review={\MR{MR1672093 (2000a:11095)}},
}

\bib{Jones:tables}{article}{
      author={Jones, John},
       title={Tables of number fields with prescribed ramification},
         url={http://math.asu.edu/~jj/numberfields/},
}

\bib{Macintyre:definable}{article}{
      author={Macintyre, Angus},
       title={On definable subsets of {$p$}-adic fields},
        date={1976},
        ISSN={0022-4812},
     journal={J. Symbolic Logic},
      volume={41},
      number={3},
       pages={605\ndash 610},
      review={\MR{MR0485335 (58 \#5182)}},
}

\bib{Mazur:isogenies}{article}{
      author={Mazur, B.},
       title={Rational isogenies of prime degree (with an appendix by {D}.
  {G}oldfeld)},
        date={1978},
        ISSN={0020-9910},
     journal={Invent. Math.},
      volume={44},
      number={2},
       pages={129\ndash 162},
      review={\MR{MR482230 (80h:14022)}},
}

\bib{McMurdy:explicitParam}{incollection}{
      author={McMurdy, Ken},
       title={Explicit parametrizations of ordinary and supersingular regions
  of {$X\sb 0(p\sp n)$}},
        date={2004},
   booktitle={Modular curves and abelian varieties},
      series={Progr. Math.},
      volume={224},
   publisher={Birkh\"auser},
     address={Basel},
       pages={165\ndash 179},
      review={\MR{MR2058650 (2005e:11074)}},
}

\bib{McCallumP:chabautySurvey}{article}{
      author={Poonen, Bjorn},
      author={McCallum, William},
       title={On the method of {C}habauty and {C}oleman},
        date={2007},
         url={http://www-math.mit.edu/~poonen/papers/chabauty.pdf},
}

\bib{pss}{article}{
      author={Poonen, Bjorn},
      author={Schaefer, Edward~F.},
      author={Stoll, Michael},
       title={Twists of {$X(7)$} and primitive solutions to {$x\sp 2+y\sp
  3=z\sp 7$}},
        date={2007},
        ISSN={0012-7094},
     journal={Duke Math. J.},
      volume={137},
      number={1},
       pages={103\ndash 158},
      review={\MR{MR2309145 (2008i:11085)}},
}

\bib{RubinS:mod2}{article}{
      author={Rubin, K.},
      author={Silverberg, A.},
       title={Mod {$2$} representations of elliptic curves},
        date={2001},
        ISSN={0002-9939},
     journal={Proc. Amer. Math. Soc.},
      volume={129},
      number={1},
       pages={53\ndash 57},
      review={\MR{MR1694877 (2001c:11064)}},
}

\bib{RubinS:modpFamilies}{article}{
      author={Rubin, K.},
      author={Silverberg, A.},
       title={Families of elliptic curves with constant mod {$p$}
  representations},
        date={1995},
       pages={148\ndash 161},
      review={\MR{MR1363500 (96j:11078)}},
}

\bib{Siksek:chabauty}{misc}{
  author = {Samir Siksek},
  title = {Explicit {C}habauty over Number Fields},
  year = {2010},
  note = {arXiv:1010.2603},
  owner = {davidmbrownjr},
  timestamp = {2009.09.12}
}


\bib{Silverman:AEC}{article}{
      author={Silverman, Joseph~H.},
       title={The arithmetic of elliptic curves},
        date={2009},
      volume={106},
       pages={xx+513},
      review={\MR{MR2514094}},
}

\end{biblist}
\end{bibdiv}

\end{document}